\numberwithin{equation}{section}
\newcommand{\be}{\begin{equation}}
\newcommand{\ee}{\end{equation}}
\newcommand{\ds}{\displaystyle}
\newcommand{\imag}{\text{\em \i}}
\newcommand{\B}{\mathcal B}
\DeclareMathOperator{\inv}{inv}
\DeclareMathOperator{\sgn}{sgn}
\DeclareMathOperator{\pf}{pf}
\newtheorem{thm}{Theorem}
\newtheorem{cor}[thm]{Corollary}
\newtheorem{lem}[thm]{Lemma}
\newtheorem{defn}{Definition}
\newtheorem{alg}{Algorithm}
\newenvironment{proofof}[1]{\medskip\noindent
   \textbf{Proof of #1:} }{\hfill $\blacksquare$\par\medskip}
\title{Determinants and Perfect Matchings}
\author{Arvind Ayyer}
\address{Arvind Ayyer\\
Department of Mathematics \\
University of California \\
Davis, CA 95616}
\email{ayyer@math.ucdavis.edu}
\date{\today}
\begin{document}

\begin{abstract}
We give a combinatorial interpretation of the determinant of a matrix
as a generating function over Brauer diagrams in two different but related ways. 
The sign of a permutation associated to its number of inversions in the Leibniz 
formula for the determinant is replaced by the number of crossings in the Brauer 
diagram.  This interpretation naturally explains why the determinant of an even
antisymmetric matrix is the square of a Pfaffian. 
\end{abstract}

\maketitle

\section{Introduction} \label{sec:intro}
There are many different formulas for evaluating the determinant of a matrix. Apart from the familiar Leibniz formula, there is Laplace formula, Dodgson's
condensation and Gaussian elimination. However, there is no formula to the best 
of our knowledge in which Cayley's celebrated formula \cite{Cayley} relating Pfaffians to determinants is transparent. In this work, we give a new
 formula which does precisely this.
 
The formula uses the notion of Brauer diagrams. These parametrize the basis elements of the so-called Brauer algebra \cite{brauer}, which is important in the representation theory of the orthogonal group. 

Brauer diagrams are perfect matchings on a certain kind of planar graph.   
We shall prove in
Theorem~\ref{thm:detm} (to be stated formally in Section~\ref{sec:doub}) that the
determinant of an $n \times n$ matrix can be expanded as a sum over
all Brauer diagrams of a certain weight function. Since perfect matchings are related to Pfaffians, we obtain a natural combinatorial interpretation of Cayley's beautiful result relating Pfaffians and determinants.
There have been some connections noted in the literature between Brauer diagrams and combinatorial objects such as Young tableaux \cite{sundaram,terada,hallew}, and Dyck paths \cite{marmat} in the past. 

The connection between determinants and perfect matchings came up
while studying the number of terms (including repetitions) in the determinants
of Hermitian matrices, which turns out to be $(2n-1)!!$. The number of
distinct terms in the determinant of symmetric and skew-symmetric matrices, on the other hand, is classical. This has been studied, among others, by
Cayley and Sylvester \cite{muir}. In particular, Sylvester showed that the number of 
distinct terms in the determinant of a skew-symmetric matrix of size $2n$
is given by $(2n-1)!! v_{n}$, where $v_{n}$ satisfies
\be
v_{n} = (2n-1)v_{n-1}-(n-1)v_{n-2}, \quad v_{0}=v_{1}=1.
\ee
Aitken \cite{Aitken} has also studied recurrences for the number of terms 
in symmetric and skew-symmetric determinants. 
The number of terms in the symmetric determinant also 
appears in a problem in the American Mathematical
Monthly proposed by Richard Stanley \cite{stanrior}.

The spirit of this work is similar to those on combinatorial interpretations of
identities and formulas in linear algebra
\cite{jackson,foata1,straub,zeil1}, combinatorial formulas for
determinants \cite{zeil2}, and for Pfaffians
\cite{halton,knuth1,mahsubvin,egecioglu}. 

The plan of the paper is as follows. Two non-standard 
representations of a matrix are given in Section~\ref{sec:faux}. 
We recall the definition of  Brauer diagrams in Section~\ref{sec:doub}.
We will also define the weight and the crossing number of a Brauer diagram, and state the main theorem  there. We will then digress to give a different combinatorial explanation for the number of terms in the determinant of these non-standard matrices in Section~\ref{sec:numterm}.  The main idea of the proof is a bijection between terms in both determinant expansions and Brauer diagrams, which will be 
given in Section~\ref{sec:bij}. We  define the crossing number for a
Brauer diagram and prove some properties about it in
Section~\ref{sec:cross}. The main result is then proved in
Section~\ref{sec:main}. 

\section{Two Different Matrix Representations} \label{sec:faux}
A word about notation: throughout, we will use $\imag$ as the
complex number $\sqrt{-1}$ and $i$ as an indexing variable.
Let $A$ be a symmetric matrix and $B$ be a skew-symmetric matrix.
Any matrix can be decomposed in two ways as a linear combination of $A$ 
and $B$, namely $A+B$ and $A+\imag B$. We denote the former by $M_{F}$ and
the latter by $M_{B}$. The terminology will be explained later.
That is, 
\be \label{defm}
(M_{F})_{i,j} = \begin{cases}
a_{i,j} +  b_{i,j} & i < j, \\
a_{j,i} -   b_{j,i} & i > j, \\
a_{i,i} & i=j,
\end{cases};
\quad
(M_{B})_{i,j} = \begin{cases}
a_{i,j} + \text{\em \i}  b_{i,j} & i < j, \\
a_{j,i} - \text{\em \i}  b_{j,i} & i > j, \\
a_{i,i} & i=j,
\end{cases}
\ee
where $a_{i,j}$ and $b_{i,j}$ are {\bf complex} indeterminates. 
For example, a generic $3\times 3$ matrix can be written in these two ways,
\be \label{mateg3}
\begin{split}
M^{(3)}_{F} &=
\begin{pmatrix}
a_{1,1} & a_{1,2}+ b_{1,2} & a_{1,3}+ b_{1,3} \\
a_{1,2}- b_{1,2} & a_{2,2} & a_{2,3}+ b_{2,3}\\
a_{1,3}- b_{1,3} & a_{2,3}- b_{2,3} & a_{3,3}
\end{pmatrix}, \\
M^{(3)}_{B} &=
\begin{pmatrix}
a_{1,1} & a_{1,2}+ \imag b_{1,2} & a_{1,3}+\imag  b_{1,3} \\
a_{1,2}- \imag b_{1,2} & a_{2,2} & a_{2,3}+ \imag b_{2,3}\\
a_{1,3}- \imag b_{1,3} & a_{2,3}- \imag b_{2,3} & a_{3,3}
\end{pmatrix}.
\end{split}
\ee
Notice that $a_{i,j}$ is defined when $i \leq j$ and $b_{i,j}$ is
defined when $i<j$. 
The determinant of the matrices is clearly a polynomial in these
indeterminates. For example, the determinant of the matrices in \eqref{mateg3} is 
given by
\be \label{deteg3}
\begin{split}
\det(M^{(3)}_{F})=& \;  a_{{1,1}}a_{{2,2}}a_{{3,3}}
-a_{{1,1}}{a_{{2,3}}}^{2} -a_{{2,2}}{a_{{1,3}}}^{2}
-a_{{3,3}}{a_{{1,2}}}^{2}\\
&+a_{{1,1}}{b_{{2,3}}}^{2}+a_{{2,2}}{b_{{1,3}}}^{2}
+a_{{3,3}}{b_{{1,2}}}^{2}
+2\,a_{{1,2}}a_{{2,3}}a_{{1,3}}\\
&-2\,a_{{1,2}}b_{{2,3}}b_{{1,3}} 
+2\,a_{{1,3}}b_{{1,2}}b_{{2,3}}-2\,a_{{2,3}}b_{{1,2}}b_{{1,3}}, \\
\det(M^{(3)}_{B})=& \; a_{{1,1}}a_{{2,2}}a_{{3,3}}
-a_{{1,1}}{a_{{2,3}}}^{2} -a_{{2,2}}{a_{{1,3}}}^{2}
-a_{{3,3}}{a_{{1,2}}}^{2}\\
&-a_{{1,1}}{b_{{2,3}}}^{2}
-a_{{2,2}}{b_{{1,3}}}^{2}-a_{{3,3}}{b_{{1,2}}}^{2}
+2\,a_{{1,2}}a_{{2,3}}a_{{1,3}}\\
&+2\,a_{{1,2}}b_{{2,3}}b_{{1,3}} 
-2\,a_{{1,3}}b_{{1,2}}b_{{2,3}}+2\,a_{{2,3}}b_{{1,2}}b_{{1,3}},
\end{split}
\ee
in these two decompositions. The number of terms in each of the formulas in 
\eqref{deteg3} is seen to be 15, which is equal to $5!!$.

\section{Brauer Diagrams} \label{sec:doub}

One of the most common representations of permutations is the {\bf two-line 
representation} or {\bf two-line diagram} of a permutation. This is also an example of a perfect matching on a complete bipartite graph.
\setlength{\unitlength}{1mm}
\begin{figure}[h!]
\centering
\begin{picture}(60, 20)
\put(14,0){1}
\put(19,0){2}
\put(24,0){3}
\put(29,0){4}
\put(34,0){5}
\put(39,0){6}
\put(44,0){7}
\put(15,4){\circle*{1}}
\put(20,4){\circle*{1}}
\put(25,4){\circle*{1}}
\put(30,4){\circle*{1}}
\put(35,4){\circle*{1}}
\put(40,4){\circle*{1}}
\put(45,4){\circle*{1}}
\put(14,16){1}
\put(19,16){2}
\put(24,16){3}
\put(29,16){4}
\put(34,16){5}
\put(39,16){6}
\put(44,16){7}
\put(15,14){\circle*{1}}
\put(20,14){\circle*{1}}
\put(25,14){\circle*{1}}
\put(30,14){\circle*{1}}
\put(35,14){\circle*{1}}
\put(40,14){\circle*{1}}
\put(45,14){\circle*{1}}
\put(15,4){\line(3,2){15}}
\put(20,4){\line(2,1){20}}
\put(25,4){\line(-1,1){10}}
\put(30,4){\line(-1,2){5}}
\put(35,4){\line(1,1){10}}
\put(40,4){\line(-2,1){20}}
\put(45,4){\line(-1,1){10}}
\end{picture}
\caption{A two-line diagram for the permutation $3641725$.}
\label{fig:permeg}
\end{figure}
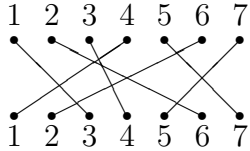

One of the advantages of a two-line diagram is  that the inversion number of
a permutation is simply the number of pairwise
intersections of the $n$ lines. In Figure~\ref{fig:permeg} above, there are 10 intersections, which is the inversion number of
the permutation $3641725$.

We will consider the complete graph on $2n$ vertices arranged in a two-line
representation.  Recall that a {\bf perfect matching} of a graph is a set of
pairwise non-adjacent edges which matches all the vertices of a
graph. The visual representations of such perfect matchings are called Brauer diagrams and are defined formally below.

\begin{defn}
Let $T$ and $B$ be the set of vertices in the top and bottom row 
respectively,  with $n$ points each, forming a two-line diagram. 
An {\bf unlabeled Brauer diagram of size $n$}, $\mu$, is a perfect matching where an edge joining two points in $T$ is called a {\bf cup}; an edge joining two points in $B$ is called a {\bf cap} and an edge joining a point in $T$ with a point in $B$ is called an {\bf arc}. For convenience, we call the former horizontal edges, and the latter,
vertical. The edges satisfy the following conditions.
\begin{enumerate}
\item Two caps may intersect in at most one point.
\item Two cups may intersect in at most one point.
\item A cap and a cup may not intersect.
\item An arc meets an arc or a cap or a cup in at most one point. 
\end{enumerate}
\end{defn}

\noindent
Let $\B_n$ be the set of unlabeled Brauer diagrams of size $n$. 
Figure~\ref{fig:pmeg} depicts an unlabeled Brauer diagram of size seven.
\setlength{\unitlength}{1mm}
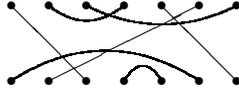
\begin{figure}[h!]
\centering
\begin{picture}(60, 20)
\put(15,4){\circle*{1}}
\put(20,4){\circle*{1}}
\put(25,4){\circle*{1}}
\put(30,4){\circle*{1}}
\put(35,4){\circle*{1}}
\put(40,4){\circle*{1}}
\put(45,4){\circle*{1}}
\put(15,14){\circle*{1}}
\put(20,14){\circle*{1}}
\put(25,14){\circle*{1}}
\put(30,14){\circle*{1}}
\put(35,14){\circle*{1}}
\put(40,14){\circle*{1}}
\put(45,14){\circle*{1}}
\put(20,4){\line(2,1){20}}
\put(25,4){\line(-1,1){10}}
\put(45,4){\line(-1,1){10}}
\qbezier(15, 4)(27.5, 12)(40, 4)
\qbezier(20, 14)(25, 10)(30, 14)
\qbezier(30, 4)(32.5, 8)(35, 4)
\qbezier(25, 14)(35, 9)(45, 14)
\end{picture}
\caption{An unlabeled Brauer diagram of size 7 with seven crossings.} \label{fig:pmeg}
\end{figure}
We now define two types of labeled Brauer diagrams.
\begin{defn}
Let $\mu \in \B_{n}$ and let $T$ be labeled with the integers 1 through $n$ from left to right. An {\bf $F$-Brauer diagram} (for forward) is a Brauer diagram where the integers 1 through $n$ are labeled left to right and an {\bf $B$-Brauer diagram} (for backward) 
is a Brauer diagram where the integers 1 through $n$ are labeled right to left.
\end{defn}
\noindent
The $F$-Brauer diagram has the same labeling as the usual two-line diagram
for a permutation.
Let $(\B_{F})_{n}$ (resp. $(\B_{B})_{n}$) be the set of $F$-Brauer diagrams
(resp. $B$-Brauer diagrams) of size $n$. Figure~\ref{fig:pmeg2} shows an example
of each type.

\setlength{\unitlength}{1mm}
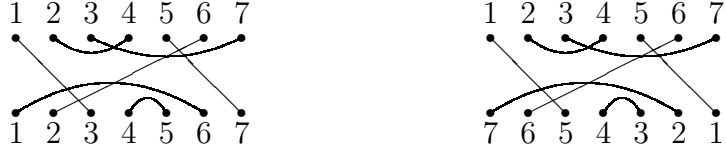
\begin{figure}[h!]
\centering
\begin{picture}(60, 20)
\put(14,0){1}
\put(19,0){2}
\put(24,0){3}
\put(29,0){4}
\put(34,0){5}
\put(39,0){6}
\put(44,0){7}
\put(15,4){\circle*{1}}
\put(20,4){\circle*{1}}
\put(25,4){\circle*{1}}
\put(30,4){\circle*{1}}
\put(35,4){\circle*{1}}
\put(40,4){\circle*{1}}
\put(45,4){\circle*{1}}
\put(14,16){1}
\put(19,16){2}
\put(24,16){3}
\put(29,16){4}
\put(34,16){5}
\put(39,16){6}
\put(44,16){7}
\put(15,14){\circle*{1}}
\put(20,14){\circle*{1}}
\put(25,14){\circle*{1}}
\put(30,14){\circle*{1}}
\put(35,14){\circle*{1}}
\put(40,14){\circle*{1}}
\put(45,14){\circle*{1}}
\put(20,4){\line(2,1){20}}
\put(25,4){\line(-1,1){10}}
\put(45,4){\line(-1,1){10}}
\qbezier(15, 4)(27.5, 12)(40, 4)
\qbezier(20, 14)(25, 10)(30, 14)
\qbezier(30, 4)(32.5, 8)(35, 4)
\qbezier(25, 14)(35, 9)(45, 14)
\end{picture}
\hfil
\begin{picture}(60, 20)
\put(14,0){7}
\put(19,0){6}
\put(24,0){5}
\put(29,0){4}
\put(34,0){3}
\put(39,0){2}
\put(44,0){1}
\put(15,4){\circle*{1}}
\put(20,4){\circle*{1}}
\put(25,4){\circle*{1}}
\put(30,4){\circle*{1}}
\put(35,4){\circle*{1}}
\put(40,4){\circle*{1}}
\put(45,4){\circle*{1}}
\put(14,16){1}
\put(19,16){2}
\put(24,16){3}
\put(29,16){4}
\put(34,16){5}
\put(39,16){6}
\put(44,16){7}
\put(15,14){\circle*{1}}
\put(20,14){\circle*{1}}
\put(25,14){\circle*{1}}
\put(30,14){\circle*{1}}
\put(35,14){\circle*{1}}
\put(40,14){\circle*{1}}
\put(45,14){\circle*{1}}
\put(20,4){\line(2,1){20}}
\put(25,4){\line(-1,1){10}}
\put(45,4){\line(-1,1){10}}
\qbezier(15, 4)(27.5, 12)(40, 4)
\qbezier(20, 14)(25, 10)(30, 14)
\qbezier(30, 4)(32.5, 8)(35, 4)
\qbezier(25, 14)(35, 9)(45, 14)
\end{picture}
\caption{The same Brauer diagram in Figure~\ref{fig:pmeg} considered as an element of $(\B_{F})_{7}$ on the left and $(\B_{B})_{7}$ on the right.} 
\label{fig:pmeg2}
\end{figure}

We draw all members of  $\B_3$ and label the matchings in Table~\ref{tab:chord3}. 

\setlength{\unitlength}{1mm}
\begin{table}[h!]
\begin{tabular}{c c c c c}
\begin{picture}(20, 20)
\put(5,5){\circle*{1}}
\put(10,5){\circle*{1}}
\put(15,5){\circle*{1}}
\put(5,10){\circle*{1}}
\put(10,10){\circle*{1}}
\put(15,10){\circle*{1}}
\put(5,5){\line(0,1){5}}
\put(10,5){\line(0,1){5}}
\put(15,5){\line(0,1){5}}
\end{picture} 
&
\begin{picture}(20, 20)
\put(5,5){\circle*{1}}
\put(10,5){\circle*{1}}
\put(15,5){\circle*{1}}
\put(5,10){\circle*{1}}
\put(10,10){\circle*{1}}
\put(15,10){\circle*{1}}
\put(5,10){\line(1,-1){5}}
\put(5,5){\line(1,1){5}}
\put(15,5){\line(0,1){5}}
\end{picture}
&
\begin{picture}(20, 20)
\put(5,5){\circle*{1}}
\put(10,5){\circle*{1}}
\put(15,5){\circle*{1}}
\put(5,10){\circle*{1}}
\put(10,10){\circle*{1}}
\put(15,10){\circle*{1}}
\put(5,10){\line(2,-1){10}}
\put(5,5){\line(1,1){5}}
\put(10,5){\line(1,1){5}}
\end{picture}
&
\begin{picture}(20, 20)
\put(5,5){\circle*{1}}
\put(10,5){\circle*{1}}
\put(15,5){\circle*{1}}
\put(5,10){\circle*{1}}
\put(10,10){\circle*{1}}
\put(15,10){\circle*{1}}
\qbezier(5, 10)(7.5, 8)(10, 10)
\put(5,5){\line(2,1){10}}
\qbezier(10,5)(12.5, 7)(15, 5)
\end{picture}
&
\begin{picture}(20, 20)
\put(5,5){\line(1,1){5}}
\put(5,5){\circle*{1}}
\put(10,5){\circle*{1}}
\put(15,5){\circle*{1}}
\put(5,10){\circle*{1}}
\put(10,10){\circle*{1}}
\put(15,10){\circle*{1}}
\qbezier(5, 10)(10, 7)(15, 10)
\put(5,5){\line(1,1){5}}
\qbezier(10,5)(12.5, 7)(15, 5)
\end{picture} \\
\begin{picture}(20, 20)
\put(5,5){\circle*{1}}
\put(10,5){\circle*{1}}
\put(15,5){\circle*{1}}
\put(5,10){\circle*{1}}
\put(10,10){\circle*{1}}
\put(15,10){\circle*{1}}
\put(5,5){\line(0,1){5}}
\put(10,5){\line(1,1){5}}
\put(15,5){\line(-1,1){5}}
\end{picture}
&
\begin{picture}(20, 20)
\put(5,5){\circle*{1}}
\put(10,5){\circle*{1}}
\put(15,5){\circle*{1}}
\put(5,10){\circle*{1}}
\put(10,10){\circle*{1}}
\put(15,10){\circle*{1}}
\put(5,10){\line(1,-1){5}}
\put(15,5){\line(-1,1){5}}
\put(5,5){\line(2,1){10}}
\end{picture}
&
\begin{picture}(20, 20)
\put(5,5){\circle*{1}}
\put(10,5){\circle*{1}}
\put(15,5){\circle*{1}}
\put(5,10){\circle*{1}}
\put(10,10){\circle*{1}}
\put(15,10){\circle*{1}}
\put(5,10){\line(2,-1){10}}
\put(10,5){\line(0,1){5}}
\put(5,5){\line(2,1){10}}
\end{picture}
&
\begin{picture}(20, 20)
\put(5,5){\circle*{1}}
\put(10,5){\circle*{1}}
\put(15,5){\circle*{1}}
\put(5,10){\circle*{1}}
\put(10,10){\circle*{1}}
\put(15,10){\circle*{1}}
\qbezier(5, 10)(7.5, 8)(10, 10)
\put(10,5){\line(1,1){5}}
\qbezier(5, 5)(10, 8)(15, 5)
\end{picture}
&
\begin{picture}(20, 20)
\put(5,5){\circle*{1}}
\put(10,5){\circle*{1}}
\put(15,5){\circle*{1}}
\put(5,10){\circle*{1}}
\put(10,10){\circle*{1}}
\put(15,10){\circle*{1}}
\qbezier(5, 10)(10, 7)(15, 10)
\put(10,5){\line(0,1){5}}
\qbezier(5, 5)(10, 8)(15, 5)
\end{picture} \\
\begin{picture}(20, 20)
\put(5,5){\circle*{1}}
\put(10,5){\circle*{1}}
\put(15,5){\circle*{1}}
\put(5,10){\circle*{1}}
\put(10,10){\circle*{1}}
\put(15,10){\circle*{1}}
\put(5,5){\line(0,1){5}}
\qbezier(10, 10)(12.5, 8)(15, 10)
\qbezier(10, 5)(12.5, 7)(15, 5)
\end{picture}
&
\begin{picture}(20, 20)
\put(5,5){\circle*{1}}
\put(10,5){\circle*{1}}
\put(15,5){\circle*{1}}
\put(5,10){\circle*{1}}
\put(10,10){\circle*{1}}
\put(15,10){\circle*{1}}
\put(5,10){\line(1,-1){5}}
\qbezier(10, 10)(12.5, 8)(15, 10)
\qbezier(5, 5)(10, 8)(15, 5)
\end{picture}
&
\begin{picture}(20, 20)
\put(5,5){\circle*{1}}
\put(10,5){\circle*{1}}
\put(15,5){\circle*{1}}
\put(5,10){\circle*{1}}
\put(10,10){\circle*{1}}
\put(15,10){\circle*{1}}
\put(5,10){\line(2,-1){10}}
\qbezier(10, 10)(12.5, 8)(15, 10)
\qbezier(5, 5)(7.5, 7)(10, 5)
\end{picture}
&
\begin{picture}(20, 20)
\put(5,5){\circle*{1}}
\put(10,5){\circle*{1}}
\put(15,5){\circle*{1}}
\put(5,10){\circle*{1}}
\put(10,10){\circle*{1}}
\put(15,10){\circle*{1}}
\qbezier(5, 10)(7.5, 8)(10, 10)
\put(15,10){\line(0,-1){5}}
\qbezier(5, 5)(7.5, 7)(10, 5)
\end{picture}
&
\begin{picture}(20, 20)
\put(5,5){\circle*{1}}
\put(10,5){\circle*{1}}
\put(15,5){\circle*{1}}
\put(5,10){\circle*{1}}
\put(10,10){\circle*{1}}
\put(15,10){\circle*{1}}
\qbezier(5, 10)(10, 7)(15, 10)
\put(10,10){\line(1,-1){5}}
\qbezier(5, 5)(7.5, 7)(10, 5)
\end{picture} \\
\end{tabular}
\vspace{0.2cm}
\caption{All Brauer diagrams belonging to $\B_3$.} \label{tab:chord3}
\end{table}

Let $\mu \in (\B_{F})_{n}$ or $(\B_{B})_{n}$. Further, let $\mu_T$ (resp. $\mu_B$) contain cups (resp. caps)  and $\mu_{TB}$ contain arcs. By convention, edges will 
be designated as ordered pairs. When the edges belong to
$\mu_T$ or $\mu_B$, they will be written in increasing order and when they
belong to $\mu_{TB}$, the vertex in the top row will be written first.
The {\bf crossing number} $\chi(\mu)$ of $\mu$ is the number of
pairwise intersections among edges in $\mu$.

\begin{table}[h!]
\begin{tabular}{|c|c|c|c|c|}
\hline
0 & 1 & 2 & 0 & 1 \\
\hline
1 & 2 & 3 & 1 & 2 \\
\hline
0 & 1 & 0 & 0 & 1 \\
\hline
\end{tabular}
\vspace{0.2cm}
\caption{Crossing numbers for all the Brauer diagrams in $\B_3$
  according to  Table~\ref{tab:chord3}.} \label{tab:cross3}
\end{table}

We now associate a weight to $\mu$, consisting of
edges $\mu_T, \mu_B$ and $\mu_{TB}$. 
 Let $a_{i,j}$ (resp. $b_{i,j}$) be unknowns defined for
$1 \leq i \leq j \leq n$ (resp. $1 \leq i < j \leq n$) and let
$(\widehat{i,j}) = (\min(i,j),\max(i,j))$. The {\bf weight of $\mu$}, $w(\mu)$,
is given by
\be
w(\mu) = \prod_{(i,j) \in \mu_T} b_{i,j} \prod_{(i,j) \in \mu_B} b_{i,j}
\prod_{(i,j) \in \mu_{TB}} a_{\widehat{i,j}}.
\ee
Note that this weight depends on
whether we consider $\mu$ as an element of $(\B_{F})_{n}$or $(\B_{B})_{n}$.
However, the formal expression is the same in both cases.
For completeness, we list the weights of all Brauer diagrams in $\B_3$ according as whether they belong in $(\B_{F})_{n}$ and $(\B_{B})_{n}$ respectively.
\begin{table}[h!]
\begin{center}
\begin{tabular}{|c|c|c|c|c|}
\hline
$a_{1,1} a_{2,2} a_{3,3} $ & $ a_{3,3} a_{1,2}^2 $ & $ a_{1,2}a_{1,3}a_{2,3} $ & 
$ a_{1,3} b_{1,2}b_{2,3} $ & $a_{1,2}b_{1,3}b_{2,3} $ \\[0.2cm]
\hline
$a_{1,1}a_{2,3}^2$ & $ a_{1,2}a_{1,3}a_{2,3}  $ & $a_{2,2} a_{1,3}^2 $ & 
$ a_{2,3}b_{1,2}b_{1,3} $ & $ a_{2,2} b_{1,3}^2  $\\[0.2cm]
\hline
$a_{1,1}b_{2,3}^2 $ & $ a_{1,2}b_{1,3}b_{2,3} $ & $a_{1,3}b_{1,2}b_{2,3}  $ & 
$ a_{3,3} b_{1,2}^2 $ & $ a_{2,3}b_{1,2}b_{1,3}$ \\[0.2cm]
\hline
\end{tabular}
\vskip 0.5cm
\begin{tabular}{|c|c|c|c|c|}
\hline 
$a_{2,2} a_{1,3}^2 $ & $ a_{1,2}a_{1,3}a_{2,3} $ & $ a_{1,1}a_{2,3}^2  $ & $ 
a_{3,3} b_{1,2}^2  $ & $ a_{2,3}b_{1,2}b_{1,3}$ \\[0.2cm]
\hline
$a_{1,2}a_{1,3}a_{2,3} $ & $ a_{3,3} a_{1,2}^2  $ & $ a_{1,1} a_{2,2} a_{3,3} $ & $ 
a_{2,3}b_{1,2}b_{1,3} $ & $ a_{2,2} b_{1,3}^2  $\\[0.2cm]
\hline
$a_{1,3}b_{1,2}b_{2,3} $ & $ a_{1,2}b_{1,3}b_{2,3} $ & $ a_{1,1}b_{2,3}^2  $ & $ 
a_{1,3} b_{1,2}b_{2,3}$ & $ a_{1,2}b_{1,3}b_{2,3}$ \\[0.2cm]
\hline
\end{tabular}
\vskip 0.2cm
\caption{Weights of all the Brauer diagrams of size $n=3$
  according to  Table~\ref{tab:chord3}. The first table describes the weights for 
  $(\B_{F})_{n}$ and the second, for $(\B_{B})_{n}$.} \label{tab:wt3}
\end{center}
\end{table}

\noindent
We are now in a position to state the main theorem.
\begin{thm} \label{thm:detm}
The determinant of an $n\times n$ matrix can be written as a sum of Brauer diagrams as,
\be
\begin{split}
\det(M_F) &= \sum_{\mu \in (\B_{F})_n} (-1)^{\chi(\mu)} w(\mu), \\
\det(M_B) &= (-1)^{\binom{n}{2}}\sum_{\mu \in (\B_{B})_n} (-1)^{\chi(\mu)}w(\mu).
\end{split}
\ee
\end{thm}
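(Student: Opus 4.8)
The plan is to expand $\det(M_F)$ by the Leibniz formula and then split every off-diagonal factor into its symmetric and antisymmetric parts, so that each entry $M_{i,\sigma(i)}$ with $i\neq\sigma(i)$ contributes either $a_{\widehat{i,\sigma(i)}}$ or $\pm b_{\widehat{i,\sigma(i)}}$, the sign being $+$ when $i<\sigma(i)$ and $-$ when $i>\sigma(i)$ (with an extra $\imag$ in the $M_B$ case). A term of the fully expanded sum is then indexed by a pair $(\sigma,S)$, where $\sigma\in S_n$ and $S\subseteq\{i:\sigma(i)\neq i\}$ records the positions at which the $b$-part was chosen. I would set up a candidate correspondence to Brauer diagrams by the local rule: each fixed point and each $a$-position $i$ becomes an arc from top $i$ to bottom $\sigma(i)$ (contributing $a_{\widehat{i,\sigma(i)}}$); each $b$-position $i$ with $i<\sigma(i)$ becomes a cup on the top vertices $\{i,\sigma(i)\}$; and each $b$-position $i$ with $i>\sigma(i)$ becomes a cap on the bottom vertices $\{i,\sigma(i)\}$. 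By construction the monomial of $(\sigma,S)$ is exactly the weight $w(\mu)$ of the resulting configuration $\mu$.

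First I would isolate the decorated permutations for which this rule produces a genuine perfect matching, in which every top and every bottom vertex is covered exactly once; call these \emph{valid}. One checks that the rule can be inverted on a Brauer diagram — an arc top $i$–bottom $j$ reads off $\sigma(i)=j$, a cup $\{p,q\}$ with $p<q$ reads off $\sigma(p)=q$, and a cap $\{r,s\}$ with $r<s$ reads off $\sigma(s)=r$ — and that consistency of this reading is forced precisely by validity, so that valid decorated permutations are in weight-preserving bijection with $(\B_{F})_{n}$. The remaining (invalid) pairs must then be shown to contribute zero; here I would construct a sign-reversing involution on them. The cleanest instance is the two mixed $a/b$ choices on a transposition $(i,\sigma(i))$, which pair as $S=\{i\}\leftrightarrow S=\{\sigma(i)\}$: they have the same monomial $a_{\widehat{i,\sigma(i)}}b_{\widehat{i,\sigma(i)}}$ and opposite signs. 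Pinning down this involution in general is a genuinely delicate part of the bookkeeping, since (for example) the all-$b$ choice on a cycle of length at least three is invalid and must be cancelled, but not always by the naively reversed cycle.

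The heart of the argument, and the step I expect to be the main obstacle, is matching the signs for a valid pair with diagram $\mu$, namely proving
\be
(-1)^{\chi(\mu)} = \sgn(\sigma)\,\prod_{i\in S}\sgn\bigl(\sigma(i)-i\bigr).
\ee
Combined with $\sgn(\sigma)=(-1)^{\inv(\sigma)}$ and the fact (noted for two-line diagrams) that $\inv(\sigma)$ is the number of crossings of the all-arc diagram of $\sigma$, this converts the Leibniz sign into $(-1)^{\chi(\mu)}$. The plan is to pass from the two-line diagram of $\sigma$ to $\mu$ by rewiring one $b$-position at a time and tracking the parity change of the crossing number, decomposing $\chi(\mu)$ into arc–arc, arc–cup, arc–cap and cup–cup/cap–cap contributions (a cup and a cap never cross, by the definition of a Brauer diagram). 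Showing that each rewiring alters the crossing parity exactly by the local factor $\sgn(\sigma(i)-i)$ is the structural content that the preliminary properties of $\chi$ are meant to supply.

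Finally, for $\det(M_B)$ I would use that $M_B$ arises from $M_F$ by $b\mapsto\imag\,b$, so $\det(M_B)=\det(M_F)\big|_{b\mapsto\imag b}$. Since in any valid diagram the number of cups equals the number of caps, the total $b$-degree of $w(\mu)$ is even, say $2m$, whence $w(\mu)|_{b\mapsto\imag b}=(-1)^{m}w(\mu)$ and all coefficients remain real. It then remains to absorb the extra $(-1)^{m}$ and the passage from $(\B_{F})_{n}$ to $(\B_{B})_{n}$: the $B$-labeling is the $F$-labeling after reversing the bottom row, a reflection whose effect on the crossing number accounts for the global prefactor $(-1)^{\binom{n}{2}}$, this being the crossing number of the fully reversed arc diagram. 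I would carry this out by the same rewiring and relabeling analysis as in the $F$ case, reducing the second identity to the first.
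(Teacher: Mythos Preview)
Your local rule for converting a decorated permutation $(\sigma,S)$ into a Brauer diagram is the load-bearing step, and it does not work. Take $n=3$, $\sigma=(1\,2\,3)$, $S=\{1,2\}$: your rule produces cups $\{1,2\}$ and $\{2,3\}$ on top together with the arc top~$3$--bottom~$1$, so top vertex $2$ is covered twice and bottom vertices $2,3$ not at all. The only other decorated permutation with the same monomial $a_{1,3}b_{1,2}b_{2,3}$ is $\sigma=(1\,3\,2)$, $S=\{2,3\}$, which by your rule yields two caps and an arc, again invalid. Both carry sign $+1$, so no sign-reversing involution on the invalid locus can remove them; indeed their combined contribution $+2$ is precisely the coefficient of $a_{1,3}b_{1,2}b_{2,3}$ in $\det(M_F^{(3)})$. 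Dually, the two Brauer diagrams that actually carry this weight (cup $\{1,2\}$, cap $\{2,3\}$, arc top~$3$--bottom~$1$, and its reflection) are not in the image of your rule: your proposed inverse would simultaneously read $\sigma(3)=2$ from the cap and $\sigma(3)=1$ from the arc. So the claimed bijection between valid pairs and $(\B_F)_n$ fails in both directions, and the invalid pairs do not contribute zero.

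The conceptual error is to let the inequality $i<\sigma(i)$ versus $i>\sigma(i)$ decide ``cup versus cap''. That inequality governs the $\pm$ sign attached to $b_{\widehat{i,\sigma(i)}}$ in the matrix entry, but it has nothing to do with where the edge sits in the diagram. In the paper's bijection (Algorithms~\ref{alg:pmtoperm} and~\ref{alg:permtopm}) one walks along each cycle of $\sigma$ and places the $b$-edges \emph{alternately} as cups and caps, the choice being dictated by the parity of the number of $b$'s already traversed in that cycle; this is exactly what forces the result to be a perfect matching, and it simultaneously singles out the terms with an even number of $b$'s per cycle as the surviving ones (the odd-$b$ terms are killed by cycle reversal, not by the involution you sketch). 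Once this correct bijection is in place, your rewiring strategy for the sign is close in spirit to the paper's induction on the number of cups via Lemma~\ref{lem:chordba}, so that part of your plan is salvageable.
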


One can verify that Theorem~\ref{thm:detm} is valid for $n=3$ in both cases 
by adding all
the weights in Table~\ref{tab:wt3} times the corresponding crossing numbers in
Table~\ref{tab:cross3} for all the Brauer diagrams in
Table~\ref{tab:chord3}, and comparing with \eqref{deteg3}.

\section{The number of terms in the determinant expansion} \label{sec:numterm}
We show by a quick argument that the number of monomials
in the determinant of an $n \times n$ matrix $M_{F}$ (and for the same reason, for $M_{B}$) 
is given by $(2n-1)!!$.
This calculation is somewhat  redundant because of
Theorem~\ref{thm:detm}. The reason for this short demonstration is
that it shows why determinants should be related to perfect matchings.
To start, let $M$ be either $M_{F}$ or $M_{B}$. Recall the Leibniz formula for the determinant of $M$,
\be
\det(M) = \sum_{\pi \in S_n} (-1)^{\inv(\pi)}(M)_{1,\pi(1)} \dots
(M)_{n,\pi(n)},
\ee
where $S_n$ is the set of permutations in $n$ letters and $\inv(\pi)$ is the number
of inversion of the permutation. Usually, this
would give us $n!$ terms, of course. In the new notation,
\eqref{defm}, we obtain many more terms because each factor
$(M)_{i,\pi(i)}$ gives two terms whenever $\pi(i) \neq i$. 

To see how many terms we now have, it is best to think of permutations
according to the number and length of cycles they contain, $\pi =
C_1\dots C_k$. If a cycle $C$ is of length 1, $C=(i)$, then it
corresponds to a diagonal element $a_{i,i}$, which contributes one
term. If, on the other hand, $C$ contains $j$ entries, then there are
$j$ off diagonal elements, which give $2^j$ terms, counting
multiplicities, exactly half of which contain an odd number of
$b_{i,j}$'s. These terms will be cancelled by the permutation $\pi'$
which has all other cycles the same, and $C$ replaced by $C'$, the
reverse of $C$. Therefore, if $C$ contains $j$ entries, we effectively
get a contribution of $2^{j-1}$ terms.

The number of terms can be written as a sum
over permutations with $k$ disjoint cycles. When there are $k$
 cycles, we get $2^{n-k}$ terms. Since the number of
permutations with $k$ disjoint cycles is the unsigned Stirling number
of the first kind, $s(n,k)$, the total number of terms is
given by
\be
\sum_{k=1}^n s(n,k) 2^{n-k}.
\ee
Since  the generating function of the unsigned 
Stirling numbers of
the first kind are given by the Pochhammer symbol or rising factorial,
\be
\sum_{k=1}^n s(n,k) x^k = (x)^{(n)} \equiv x(x+1)\cdots(x+n-1),
\ee
we can calculate the more general sum,
\be
\sum_{k=1}^n s(n,k) x^{n-k} = (1+x)(1+2x)\cdots(1+(n-1)x).
\ee
Substituting $x=2$ in the above equation gives $(2n-1)!!$, the desired answer.

\section{Bijection between terms and Labeled Brauer diagrams} \label{sec:bij}
We now describe the bijection between labeled Brauer diagrams on the one hand
and permutations leading to a product of $a_{i,j}$'s and $b_{i,j}$'s
on the other. The algorithm is independent of whether we consider $\B_{F}$ or 
$\B_{B}$. Let $\mu$ be a labeled Brauer diagram.
We first state the algorithm constructing the latter
from the former.

\begin{alg} \label{alg:pmtoperm}
We start with the three sets of matchings $\mu_T, \mu_B$ and $\mu_{TB}$.
\begin{enumerate}
\item For each term $(i,j)$ in $\mu_T$ and $\mu_B$, write the term
  $b_{i,j}$ and for $(i,j)$ in $\mu_{TB}$, write the term
  $a_{\widehat{i,j}}$. 
\item Start with $\pi=\emptyset$.
\item Find the smallest integer $i_1 \in T$ not yet in $\pi$ and
  find its partner $i_2$.  That is, either $(i_1,i_2) \in \mu_{TB}$ or
  $\widehat{(i_1,i_2)} \in \mu_T$. If $i_2=i_1$, then append the cycle $(i_1)$
  to $\pi$ and repeat Step~3. Otherwise move on to Step~4.
\item If $i_{k}$ is in $T$ (resp. $B$), look for the partner of the
  other $i_{k}$ in $B$ (resp. $T$) and call it $i_{k+1}$. Note that $i_{k+1}$ 
  can be in $T$ or  $B$ in both cases.
\item Repeat Step~4 for $k$ from 2 until $m$ such that
  $i_{m+1}=i_1$. Append the cycle $(i_1,i_2,\dots,i_m$) to $\pi$.
\item Repeat Steps~3-5 until $\pi$ is a permutation on $n$ letters in
  cycle notation.
\end{enumerate}
\end{alg}

Therefore, we obtained the desired product in Step~1 and the
permutation at the end of Step~6. Here is a simple consequence of the
algorithm.

\begin{lem}
By the construction of Algorithm~\ref{alg:pmtoperm}, if the triplet
$(\mu_T, \mu_B, \\ \mu_{TB})$ leads to $\pi$, then $(\mu_B, \mu_T,\mu_{TB})$ leads to
$\pi^{-1}$.
\end{lem}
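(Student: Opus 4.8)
The plan is to realize Algorithm~\ref{alg:pmtoperm} as a walk on an auxiliary graph, and to identify the passage from $(\mu_T,\mu_B,\mu_{TB})$ to $(\mu_B,\mu_T,\mu_{TB})$ with reflecting that graph about its horizontal axis. Adjoin to $\mu$ the gluing matching $N$ that joins, for every label $k$, the top vertex $k$ to the bottom vertex $k$. Then $\mu\cup N$ is a disjoint union of cycles, each alternating between a $\mu$-edge (cup, cap, or arc) and an $N$-edge. Steps~3--5 traverse one such cycle: beginning at the top vertex $i_1$, one alternately follows a $\mu$-edge and then an $N$-edge, recording after each $\mu$-edge the label $i_2,i_3,\dots$ just reached, and halting once this label returns to $i_1$. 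Hence every $\mu\cup N$ cycle yields exactly one cycle $(i_1,\dots,i_m)$ of $\pi$, whose entries are precisely the labels of the $m$ distinct $N$-edges lying on that cycle.

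With this reformulation the lemma becomes a statement about reversing the traversal. Let $\phi$ be the involution of the vertex set exchanging the top and the bottom vertex of each label $k$. Reflecting by $\phi$ fixes every edge of $N$, exchanges cups with caps on the same label pairs, and sends the arc joining top $i$ to bottom $j$ to the arc joining top $j$ to bottom $i$; thus $\phi$ is a graph isomorphism from $\mu\cup N$ onto the reflected diagram, whose cup and cap data are $(\mu_B,\mu_T)$ and whose arcs are those of $\mu_{TB}$ with top and bottom interchanged. This reflected diagram is the one on which Algorithm~\ref{alg:pmtoperm} is run for the triple $(\mu_B,\mu_T,\mu_{TB})$. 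The crucial point is directional: in an alternating cycle the top and bottom copies of $i_1$ are joined by a single $N$-edge, so leaving each of them along its $\mu$-edge proceeds in opposite senses around the cycle. Because both the original run and the reflected run start at the top copy of $i_1$---which is the $\phi$-image of the bottom copy, where the original traversal closes up---the reflected run retraces the original one backwards.

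Carrying out the reverse traversal and reading off the $N$-labels in the new order shows that the cycle $(i_1,i_2,\dots,i_m)$ of $\pi$ is replaced by $(i_1,i_m,i_{m-1},\dots,i_2)$, the corresponding cycle of $\pi^{-1}$; ranging over all cycles gives $\pi\mapsto\pi^{-1}$. Two bookkeeping points make the reversal airtight. First, $\pi$ and $\pi^{-1}$ have identical cycles as sets, so the smallest unused top label selected in Step~3 agrees for the two diagrams and the cycles are emitted in the same order; in particular $i_1$ is the least element of its cycle in both runs. Second, the mild asymmetry of the algorithm---Step~3 takes a $\mu$-edge with no preceding $N$-jump, while every later step jumps along $N$ first---matches up correctly under the reversal, producing exactly the one-step index shift visible in the reversed cycle $(i_1,i_m,\dots,i_2)$.

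I expect the main obstacle to be the orientation bookkeeping for arcs. Under $\phi$ an arc on the pair $\{i,j\}$ is sent to the arc with the roles of top and bottom interchanged, so that the convention ``the top vertex is written first'' silently reverses the ordered pair recorded in $\mu_{TB}$. Keeping this reversal consistent with the alternating-walk description---and in particular verifying that it is precisely what converts ``leave the top copy of $i_1$ along its $\mu$-edge'' into the reverse of ``leave the bottom copy of $i_1$ along its $\mu$-edge''---is the step that actually produces the inversion $\pi\mapsto\pi^{-1}$, and is where a direction- or index-slip would most easily go unnoticed.
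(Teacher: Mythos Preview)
Your argument is correct and reaches the same conclusion as the paper---that the cycle $(i_1,i_2,\dots,i_m)$ produced from $(\mu_T,\mu_B,\mu_{TB})$ becomes $(i_1,i_m,\dots,i_2)$ when the roles of $\mu_T$ and $\mu_B$ are swapped---but you arrive there by a more structured route. The paper's proof is essentially a two-sentence assertion of this cycle reversal; you instead build the auxiliary graph $\mu\cup N$, identify the algorithm with an alternating walk on it, and realize the swap $(\mu_T,\mu_B)\mapsto(\mu_B,\mu_T)$ as the top/bottom reflection $\phi$, from which the orientation reversal follows geometrically. Your framing makes explicit why the traversal reverses (opposite departures from the two ends of the $N$-edge at $i_1$) and handles the bookkeeping about Step~3's choice of least unused label, points the paper leaves implicit. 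The concern you flag about arc orientation is real but harmless: the algorithm only ever asks for the \emph{partner} of a vertex, so the ordered-pair convention on $\mu_{TB}$ is cosmetic, and your reflection argument goes through once one reads $\mu_{TB}$ as an unordered matching. In short, same idea, but you have supplied the justification the paper omits.
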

\begin{proof}
Each cycle $(i_1,i_2,\dots,i_m)$ constructed according to
Algorithm~\ref{alg:pmtoperm} by the triplet $(\mu_T, \mu_B,\mu_{TB})$ will
be constructed as $(i_1,i_m,\dots, i_2)$ by the triplet $(\mu_B,
\mu_T,\mu_{TB})$. Since each cycle will be reversed, this is the inverse
of the original permutation.
\end{proof}

We now describe the reverse algorithm.
\begin{alg} \label{alg:permtopm}
We start with a product of $a_{i,j}$'s and $b_{i,j}$'s, and a
permutation $\pi=C_1\dots C_m$ written in cycle notation such that $1
\in C_1$, the smallest integer in $\pi \setminus C_1$ belongs to
$C_2$, and so on.
\begin{enumerate}
\item For each $b_{i,j}$, we obtain a term $\widehat{(i,j)}$ which
  belongs either to $\mu_T$ or $\mu_B$ and for each $a_{i,j}$, we obtain
  one of $(i,j)$ or $(j,i)$ which belongs to $\mu_{TB}$.
\item Start with $\mu_T=\mu_B=\mu_{TB}=\emptyset$. Set $k=1$.
\item Find the first entry $i_1$ in $C_k$ and look for either
  $a_{i_1,i_2}$ or $b_{i_1,i_2}$. If the former, assign $i_2$ to $B$
  and append $(i_1,i_2)$ to $\mu_{TB}$ and otherwise, assign $i_2$ to
  $T$ and append $(i_1,i_2)$ to $\mu_T$. Set $l=2$.
\item Find either $a_{i_l,i_{l+1}}$ or $b_{i_l,i_{l+1}}$. Assign
  $i_{l+1}$ to one of $T$ or $B$ and $(i_l,i_{l+1})$ to one of $\mu_T,
  \mu_B$ or $\mu_{TB}$ according to the following table.
\begin{equation*}
\begin{array}{|c|c|c|c|c|}
\hline
i_l & \text{Term} & i_{l+1} & (i_l,i_{l+1}) & \text{Next }i_{l+1} \\
\hline
T & a & B & \mu_{TB} & T \\
T & b & T & \mu_T & B \\
B & a & T & \mu_{TB} & B \\
B & b & B & \mu_B & T \\
\hline
\end{array}
\end{equation*}
Increment $l$ by one.

\item Repeat Step~4 until you return to $i_1$, which will necessarily
  belong to $B$, since there are an even number of $b_{i,j}$'s in the
  term.

\item Increment $k$ by 1. 
\item Repeat Steps~3-6 until $k=m$, i.e., until all cycles are exhausted.
\end{enumerate}
\end{alg}

The following result is now an easy consequence.

\begin{lem} \label{lem:bij}
Algorithms~\ref{alg:pmtoperm} and \ref{alg:permtopm} are
inverses of each other.
\end{lem}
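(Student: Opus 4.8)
The plan is to recognise both algorithms as two readings of the same object, namely the traversal of the cycles of an auxiliary $2$-regular graph, and then to check that each algorithm restores precisely the information the other suppresses.

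First I would attach to a labeled Brauer diagram $\mu$ the graph $G(\mu)$ on the $2n$ vertices given by the top and bottom copies of $1,\dots,n$, whose edge set is the matching of $\mu$ together with the $n$ \emph{vertical} edges joining the top and bottom copies of each label. Every vertex lies on exactly one matching edge and one vertical edge, so $G(\mu)$ is a disjoint union of cycles along which the two edge types alternate. The claim is that Algorithm~\ref{alg:pmtoperm} is exactly the traversal of these cycles: Step~3 picks a starting top vertex and leaves along its matching edge, and each visit to Step~4 crosses a vertical edge (the move to the other $i_k$) followed by a matching edge. The labels met at the matching endpoints, read in order, give the cycle $(i_1,\dots,i_m)$ with $\pi(i_l)=i_{l+1}$, while Step~1 records whether each matching edge is a cup or cap (a $b$) or an arc (an $a$). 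A short count shows the walk closes on reaching the bottom copy of its starting label, and that the number of horizontal edges in each cycle is therefore even, which is the parity used in Step~5 of Algorithm~\ref{alg:permtopm}.

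Next I would show that Algorithm~\ref{alg:permtopm} rebuilds this very walk from the pair $(\pi,\text{monomial})$. The Step~1's of the two algorithms are visibly inverse as a correspondence between edge types and factors, arcs $\leftrightarrow a$ and horizontal edges $\leftrightarrow b$. What the forward Step~1 forgets is the cup/cap distinction and the orientation of each arc, equivalently the row ($T$ or $B$) of every vertex; this is what the table restores. Because the walk is pinned to begin at the top copy of $i_1$, the current row together with the type of the factor $(M)_{i_l,\pi(i_l)}$ forces both the placement of the edge (into $\mu_T$, $\mu_B$ or $\mu_{TB}$) and the row of the next vertex, so the reconstruction is deterministic. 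I would then verify, case by case, that the four rows of the table are the step-by-step inverses of the transitions in Step~4 of Algorithm~\ref{alg:pmtoperm}: an $a$ places an arc and, after the vertical flip recorded in the last column, resumes in the same row, whereas a $b$ places a horizontal edge in the current row and flips. Comparing edges as unordered pairs of (row, label) vertices makes the top-first writing convention for arcs a harmless normalisation.

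Finally I would align the order in which the two algorithms split their input into cycles. In Algorithm~\ref{alg:pmtoperm} each new cycle starts at the smallest label not yet used; since every label of a cycle is unused when that cycle begins, this starting label is the minimum of its cycle, and cycles are emitted in increasing order of their minima, which is exactly the convention imposed on $\pi=C_1\cdots C_m$ in Algorithm~\ref{alg:permtopm}. The fixed-point case matches as well: a straight arc from top-$i$ to bottom-$i$ yields $i_2=i_1$ and the cycle $(i)$ from $a_{i,i}$, and conversely. With the orderings and base cases aligned, it follows cycle by cycle and edge by edge that Algorithm~\ref{alg:permtopm} undoes Algorithm~\ref{alg:pmtoperm} and vice versa. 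I expect the genuine work to sit entirely in the case analysis of the previous paragraph: confirming that the cup/cap choice and the arc orientations, none of which survive in the monomial, are recovered uniquely and correctly from the row bookkeeping and the pinned starting vertex. Once the graph-traversal picture is set up, the remaining verifications are routine.
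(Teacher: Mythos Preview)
Your proposal is correct. The paper itself offers no proof of this lemma beyond the sentence ``The following result is now an easy consequence,'' so there is nothing to compare at the level of argument; you are supplying the verification the author omits.

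Your framing via the auxiliary $2$-regular graph $G(\mu)$ is a clean way to see why the two algorithms are inverse: it makes transparent that Algorithm~\ref{alg:pmtoperm} is a cycle traversal alternating matching and vertical edges, and that Algorithm~\ref{alg:permtopm} reconstructs the row information (and hence the cup/cap/arc distinctions) deterministically from the pinned top start and the $a$/$b$ labels. The only places one must be careful are exactly where you flag them: that the table in Step~4 of Algorithm~\ref{alg:permtopm} inverts the transitions of Step~4 of Algorithm~\ref{alg:pmtoperm} case by case, and that the ``smallest unused label in $T$'' rule matches the cycle-ordering convention imposed on $\pi$. Both checks are routine, as you say.
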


\section{The Crossing Number} \label{sec:cross}
Now that we have established a bijection between terms in the
determinant expansion and labeled Brauer diagrams, we need to show
that the sign associated to both of these are the same.  We start with
a labeled Brauer diagram $\mu$, which leads to a permutation 
$\pi=C_1\dots C_m$ and a product of $a$'s and $b$'s according to
Algorithm~\ref{alg:pmtoperm}. Let $\tau$ be the same product obtained
from the determinant expansion of the matrix using permutation $\pi$
{\em including the sign}.  From the definition of the matrix
\eqref{defm}, we will first write a formula for the sign associated to
$\tau$.

Let $C_j = (n^{(j)}_1,\dots,n^{(j)}_{l(j)})$. Then, define the sequences
$\beta^{(j)}$ (resp. $\gamma^{(j)}$) of length $l(j)$ consisting of terms $\pm 1$ (resp. $\pm i$) according to the following definition.
\be
\beta^{(j)}_i = \begin{cases}
+1 & n^{(j)}_i < n^{(j)}_{i+1},\\
-1 & n^{(j)}_i > n^{(j)}_{i+1},
\end{cases};
\quad
\gamma^{(j)}_i = \begin{cases}
+i & n^{(j)}_i < n^{(j)}_{i+1},\\
-i & n^{(j)}_i > n^{(j)}_{i+1},
\end{cases}
\ee
where $n^{(j)}_{l(j)+1} \equiv n^{(j)}_1$. Then the sign associated to
the term $\tau$ depends on whether $\mu$ belongs to $(B_{F})_{n}$ 
or $(B_{B})_{n}$. In the former case, we have the formula
\be \label{signtau1}
\sgn(\tau) = (-1)^{\inv(\pi)} \prod_{j=1}^m \;\; \prod_{\substack{i=1
    \\ \ds b_{\widehat{n^{(j)}_i,n^{(j)}_{i+1}}} \in \tau  }}^{l(j)}
\beta^{(j)}_i.
\ee
and in the latter,
\be \label{signtau2}
\sgn(\tau) = (-1)^{\inv(\pi)} \prod_{j=1}^m \;\; \prod_{\substack{i=1
    \\ \ds b_{\widehat{n^{(j)}_i,n^{(j)}_{i+1}}} \in \tau  }}^{l(j)}
\gamma^{(j)}_i.
\ee
Since the number of $b$'s in the second product
 is even for all $j$, the product in \eqref{signtau2}
will necessarily be real and equal to $\pm 1$. 

First we look at Brauer diagrams with no cups or caps. There are no
$b_{i,j}$'s in the associated term in the determinant expansion.

\begin{lem} \label{lem:onlyas}
Suppose $\mu$ is a labeled Brauer diagram such that $\mu_T = \mu_B = \emptyset$ and let $\pi$ be the associated permutation. Then, if $\mu \in (\B_{F})_{n}$, then
\be
\inv(\pi) = \chi(\mu),
\ee
and if $\mu \in (\B_{B})_{n}$, then
\be
\inv(\pi)+\chi(\mu) = \binom n2.
\ee
\end{lem}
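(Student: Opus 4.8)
The plan is to observe that the hypothesis $\mu_T = \mu_B = \emptyset$ means $\mu$ consists entirely of arcs, so that $\mu$ is precisely a permutation matching and $\chi(\mu)$ counts crossings among arcs only. First I would pin down the correspondence in the $(\B_F)_n$ case: when all edges are arcs, Algorithm~\ref{alg:pmtoperm} produces a permutation $\pi$ whose two-line diagram is exactly $\mu$ with the standard left-to-right labeling on both rows, because following the arcs just reads off $i \mapsto \pi(i)$ where $(i,\pi(i)) \in \mu_{TB}$. Then the equality $\inv(\pi) = \chi(\mu)$ is nothing but the classical fact recalled just after Figure~\ref{fig:permeg}: in a two-line diagram, the number of pairwise crossings of the $n$ connecting lines equals the inversion number of the permutation. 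So the $F$-case reduces to a statement the paper has already invoked, and I would cite it directly, spelling out only that two arcs $(i,\pi(i))$ and $(i',\pi(i'))$ with $i<i'$ cross if and only if $\pi(i) > \pi(i')$.

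For the $B$-case, the difference is purely in the labeling: the top row is still labeled $1,\dots,n$ left to right, but the bottom row is labeled $n,n-1,\dots,1$, i.e.\ right to left. The crossing number $\chi(\mu)$ is a geometric quantity that does not depend on labels, so $\chi(\mu)$ is the same integer for the $F$- and $B$-readings of the same unlabeled diagram. The plan is therefore to relate the $B$-permutation $\pi^B$ to the $F$-permutation $\pi^F$ through the reversal. If the bottom vertex sitting at geometric position $p$ carries $F$-label $p$ but $B$-label $n+1-p$, then the $B$-permutation is $\pi^B = \rho \circ \pi^F$ where $\rho(x) = n+1-x$ is the reversing involution (more precisely the two permutations are related by composing with $\rho$ on the appropriate side; I would fix the exact composition by chasing one arc). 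Consequently $\inv(\pi^B)$ differs from $\inv(\pi^F)$ in a controlled way.

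The key computation is then the standard identity $\inv(\rho \circ \sigma) = \binom{n}{2} - \inv(\sigma)$ for any $\sigma \in S_n$, where $\rho$ is the reversal: composing with $\rho$ turns every non-inversion of $\sigma$ into an inversion and vice versa, and there are exactly $\binom{n}{2}$ pairs in total. Applying this with $\sigma = \pi^F$ gives $\inv(\pi^B) = \binom{n}{2} - \inv(\pi^F) = \binom{n}{2} - \chi(\mu)$, using the $F$-case established first. Rearranging yields $\inv(\pi^B) + \chi(\mu) = \binom{n}{2}$, which is exactly the claimed formula once I rename $\pi^B$ as $\pi$.

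I expect the main obstacle to be bookkeeping rather than depth: I must verify carefully that the permutation extracted by Algorithm~\ref{alg:pmtoperm} in the all-arc case really is the two-line permutation (so that the classical crossing--inversion identity applies verbatim), and I must get the direction of the reversal composition correct so that the off-by-sign does not corrupt the $\binom{n}{2}$ count. A clean way to sidestep ambiguity is to argue the $B$-identity directly: for two arcs terminating at bottom positions $p < q$, they cross geometrically iff their top endpoints are in the opposite order, whereas they form an inversion of $\pi^B$ iff the $B$-labels of those bottom endpoints are in a certain order; since the $B$-labels reverse the positions, "crossing" and "inversion" become complementary conditions on each of the $\binom{n}{2}$ pairs, giving $\chi(\mu) + \inv(\pi^B) = \binom{n}{2}$ immediately. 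I would present this direct pairwise argument as the cleanest route, keeping the reversal-composition viewpoint as intuition.
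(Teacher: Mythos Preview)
Your proposal is correct and, in particular, the ``direct pairwise argument'' you settle on at the end is essentially the paper's own proof: the paper fixes a pair of top labels $i<j$, observes that in the $B$-labeling the arcs $(i,\pi_i)$ and $(j,\pi_j)$ cross iff $\pi_i<\pi_j$ (a non-inversion), and concludes that crossings and inversions partition the $\binom{n}{2}$ pairs. Your alternative route through the reversal identity $\inv(\rho\circ\sigma)=\binom{n}{2}-\inv(\sigma)$ is a harmless repackaging of the same complementarity, and your caution about checking that Algorithm~\ref{alg:pmtoperm} in the all-arc case returns the two-line permutation is well placed but easily discharged.
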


\begin{proof}
The former is obvious since $\mu$ is identical to the two-line diagram for $\pi$. 
The latter requires just a little more work.
For a matching with only arcs, the edges are exactly given
by $(i,\pi_i)$ for $i \in [n]$.
Now consider two edges
$(i,\pi_i)$ and $(j,\pi_j)$ where $i<j$, without loss of
generality. Recall that $i,j \in T$ and $\pi_i,\pi_j \in B$ by
convention. Then $(i,\pi_i)$ intersects $(j,\pi_j)$ if and only if
$\pi_i< \pi_j$ because of the right-to-left numbering convention in $B$.
Thus,
\be
\chi(\mu) = |\{(i,j)| i<j, \, \pi_i <\pi_j \}|.
\ee
On the other hand, the definition of an inversion number is
\be
\inv(\pi) = |\{(i,j)| i<j, \, \pi_i >\pi_j \}|.
\ee
Since these two count disjoint cases, which span all possible pairs
$(i,j)$, they must sum up to the total number of possibilities $(i,j)$
where $i<j$, which is exactly $\binom n2$.
\end{proof}

Now we will see what happens to the crossing number of a matching when
a cup and a cup are converted to two arcs.

\begin{lem} \label{lem:chordba}
All other edges remaining the same, for any $i,j,k,l$, the following results hold.
\vspace{0.2cm}
\setlength{\unitlength}{1mm}

\begin{enumerate}
\item[(a)]
 \be
  \label{chordba}
(-1)^{\ds \chi \Bigg(
\begin{picture}(40, 5)
\put(0,-5){\line(1,0){40}}
\put(0,5){\line(1,0){40}}
\qbezier(5, -5)(20,-1)(35, -5)
\qbezier(10,5)(20, 2)(30, 5)
\put(5,-9){k}
\put(35,-9){l}
\put(10,7){i}
\put(30,7){j}
\end{picture}
\Bigg)}
=
(-1)^{\ds \chi \Bigg(
\begin{picture}(40, 5)
\put(0,-5){\line(1,0){40}}
\put(0,5){\line(1,0){40}}
\put(5,-5){\line(1,2){5}}
\put(35,-5){\line(-1,2){5}}
\put(5,-9){k}
\put(35,-9){l}
\put(10,7){i}
\put(30,7){j}
\end{picture}
\Bigg)}.\nonumber
\ee

\item[(b)] \vspace{.5cm}
\be \label{chordba2}
(-1)^{\ds \chi \Bigg(
\begin{picture}(40, 5)
\put(0,-5){\line(1,0){40}}
\put(0,5){\line(1,0){40}}
\qbezier(5, -5)(17.5,0)(30, 5)
\qbezier(10,5)(22.5, 0)(35, -5)
\put(5,-9){k}
\put(35,-9){l}
\put(10,7){i}
\put(30,7){j}
\end{picture}
\Bigg)}
=
-(-1)^{\ds \chi \Bigg(
\begin{picture}(40, 5)
\put(0,-5){\line(1,0){40}}
\put(0,5){\line(1,0){40}}
\put(5,-5){\line(1,2){5}}
\put(35,-5){\line(-1,2){5}}
\put(5,-9){k}
\put(35,-9){l}
\put(10,7){i}
\put(30,7){j}
\end{picture}
\Bigg)}. \nonumber
\ee

\item[(c)] \vspace{.5cm}
\be \label{chordba3}
(-1)^{\ds \chi \Bigg(
\begin{picture}(40, 5)
\put(0,-5){\line(1,0){40}}
\put(0,5){\line(1,0){40}}
\qbezier(5, 5)(20,1)(35, 5)
\qbezier(10,5)(20, 0)(30, -5)
\put(5,7){i}
\put(35,7){k}
\put(10,7){j}
\put(30,-9){l}
\end{picture}
\Bigg)}
=
-(-1)^{\ds \chi \Bigg(
\begin{picture}(40, 5)
\put(0,-5){\line(1,0){40}}
\put(0,5){\line(1,0){40}}
\qbezier(5, 5)(17.5,0)(30, -5)
\qbezier(10,5)(22.5, 1)(35, 5)
\put(5,7){i}
\put(35,7){k}
\put(10,7){j}
\put(30,-9){l}
\end{picture}
\Bigg)}. \nonumber
\ee

\item[(d)] \vspace{.5cm}
\be \label{chordbb}
(-1)^{\ds \chi \Bigg(
\begin{picture}(40, 5)
\put(0,-5){\line(1,0){40}}
\put(0,5){\line(1,0){40}}
\qbezier(5, 5)(20,-2)(35, 5)
\qbezier(10,5)(20, 2)(30, 5)
\put(5,7){i}
\put(35,7){l}
\put(10,7){j}
\put(30,7){k}
\end{picture}
\Bigg)}
=
-(-1)^{\ds \chi \Bigg(
\begin{picture}(40, 5)
\put(0,-5){\line(1,0){40}}
\put(0,5){\line(1,0){40}}
\qbezier(5, 5)(17.5, -1)(30, 5)
\qbezier(10,5)(22.5, -1)(35, 5)
\put(5,7){i}
\put(35,7){l}
\put(10,7){j}
\put(30,7){k}
\end{picture}
\Bigg)}. \nonumber
\ee

\end{enumerate}

\end{lem}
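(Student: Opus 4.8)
The plan is to prove all four identities at once by splitting the change in the crossing number into two contributions: the crossing between the two edges that are actually modified, and the crossings of those edges with every other (\emph{external}) edge of the diagram. In each of (a)--(d) the old pair and the new pair join the \emph{same} four endpoints, so they are two of the three perfect matchings of those four points. First I would dispose of the internal crossing by direct inspection: using the interleaving of the horizontal positions of the four marked points together with conditions (1)--(4) (which force any relevant pair of edges to meet at most once), one checks that the number of internal crossings is the same for the two members of (a), namely none, whereas it differs by exactly one between the two members of (b), (c) and (d). This already predicts the sign $(-1)^{0}=+1$ in (a) and $(-1)^{1}=-1$ in (b), (c), (d), \emph{provided} that the external crossings contribute an even change in every case.

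So the crux is the claim that, for every external edge $e$, the number of crossings of $e$ with the old pair and with the new pair have the same parity. I would prove this topologically. Let $\Gamma$ be the union of all four edges involved in the move (the two old and the two new); since the two matchings are distinct, $\Gamma$ is a single closed curve through the four common endpoints, and the desired parity statement is exactly that $e$ meets $\Gamma$ an even number of times. The key feature of the drawing convention is that cups dip below the top line while caps rise above the bottom line, so $\Gamma$ stays inside the strip between the two rows and touches the two lines only at the four marked endpoints. Consequently the open half-plane above the top line, the open half-plane below the bottom line, and the unbounded region around $\Gamma$ all lie in a single region of the complement of $\Gamma$, and every vertex of the diagram reaches this \emph{outer} region along a short vertical ray meeting no edge. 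In particular both endpoints of $e$ lie in the outer region, so $e$ crosses the closed curve $\Gamma$ an even number of times, since the mod-$2$ intersection number of an arc with a closed curve equals the difference of the region labels of its two endpoints and is therefore $0$ here.

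Combining the two parts, the only parity change in $\chi$ comes from the internal crossing, which produces precisely the stated signs. I expect the main obstacle to be the rigour of the topological step: one must first put the diagram in general position (all intersections transverse, no edge through a vertex, no triple points), and then justify the mod-$2$ labelling of the complementary regions even though $\Gamma$ may cross itself, which it genuinely does in (d) and in the one-crossing members of (b) and (c); this labelling is in fact well defined for any immersed closed curve, so the argument survives. A safe but duller fallback, which I would keep in reserve, is to replace the topological step by a finite case analysis: for each external edge $e$ record whether each of its two endpoints lies to the left of, between, or to the right of the marked points on its own row, and verify in each case that the crossing parity with the old pair matches that with the new pair.
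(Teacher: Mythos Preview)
Your argument is correct, and in fact cleaner than the paper's. The paper proves only case~(a) and does so by an explicit positional case analysis: it classifies every external edge $e$ according to whether each endpoint falls to the left of, between, or to the right of the marked points on its row, collects these into four counters $n_{ij}$, $n_{kl}$, $n_{ijkl}$, $n_{LR}$, and checks that the difference in total crossings between the two diagrams is $2n_{ijkl}-2n_{LR}$, hence even. The remaining cases are left to the reader as ``identical in idea''. Your approach replaces this bookkeeping by a single topological observation: the union $\Gamma$ of the old and new pairs is a closed $4$-cycle lying in the strip, the unbounded complement is a single region containing (after a harmless vertical push) both endpoints of any external edge, and the mod-$2$ intersection number of an arc with an immersed closed curve depends only on the regions of its endpoints. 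This handles all four cases uniformly and makes the parity of the external contribution transparent, at the cost of invoking general position and the mod-$2$ winding number for possibly self-intersecting curves; your fallback case analysis is essentially the paper's proof. Either way the internal-crossing count, which you read off correctly from the pictures, supplies the distinguishing sign.
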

\vspace{0.2cm}
\begin{proof}
We will prove the result only for (a). The idea of the proof is identical for all other cases.
We consider all possible edges that could intersect with any of the 4
edges $(i,j), (k,l), (i,l)$ and $(j,k)$ illustrated above. We group
them according to their position. 
\begin{enumerate}
\item
Let $n_{ij}$ (resp. $n_{kl}$) be the number of edges such that
exactly one of its endpoints lies between $i$ and $j$ (resp. $k$ and
$l$), and the other endpoint does not lie between $k$ and $l$
(resp. $i$ and $j$). These edges intersect $(i,j)$ (resp. $(k,l)$)
and do not intersect $(k,l)$ (resp. $(i,j)$). They also intersect
exactly one among $(i,l)$ and $(j,k)$.

\item Let $n_{ijkl}$ be the number of edges one of whose endpoints
  lies between $i$ and $j$, and the other, between $k$ and $l$. These
  intersect both $(i,j)$ and $(k,l)$.

\item Let $n_{LR}$ be the number of edges, one of whose endpoints is
  less than $k$ if it belongs to the top row and more than $j$ in the
  bottom row, and the other is more than $l$ in the top row or less
  than $i$ in the bottom row. These are edges which do not intersect
  either $(i,j)$ or $(k,l)$, but intersect both $(i,l)$ and $(j,k)$.
\end{enumerate}

Now, the contribution of the edges $(i,j)$ and $(k,l)$ to $\chi$ in
the left hand side of \eqref{chordba} is $n_{i,j}+n_{kl}+2n_{ijkl}$,
whereas that to the right hand side of \eqref{chordba} is
$n_{ij}+n_{kl}+2n_{LR}$. Since all other edges are the same, the
difference between the crossing number of the configuration on the
left and that on the right is $2n_{ijkl}-2n_{LR}$ and hence, the parity
of both crossing numbers is the same.
\end{proof}

\section{The Main Result} \label{sec:main}
We now prove the theorem in a purely combinatorial way.  The proof
will depend on whether the Brauer diagram belongs to $(\B_{F})_{n}$ or
$(\B_{B})_{n}$, but the idea is very similar in both cases. We will
prove the former and point out the essential difference in the proof
of the latter at the very end.

\begin{proofof}{Theorem~\ref{thm:detm}}
From Lemma~\ref{lem:bij}, we have shown that every term in the
expansion of the determinant corresponds, in an invertible way, to a
Brauer diagram. We will now show the signs are also equal by performing
an induction on the number of cups, or equivalently caps, since both are the same.

Consider a $F$-Brauer diagram $\mu \in (\B_{F})_{n}$ 
with at least one cup and cap each. 
Using the bijection of Lemma~\ref{lem:bij}, construct the associated
permutation $\pi$. By the construction in
Algorithm~\ref{alg:pmtoperm}, there have to be at least two $b$'s in
the same cycle $C$, say. We pick two of them such that $(i,j) \in \mu_B$ is a 
cup and $(k,l) \in \mu_T$ is a cap. We have to show that $(-1)^{\chi(\mu)}= \sgn(\tau)$ using  \eqref{signtau2}.

We now get a new Brauer diagram $\mu' \in (\B_{F})_{n}$ by replacing the cup 
$(i,j)$ and the cap $(k,l)$  by the arcs $(i,k)$ and $(j,l)$ using 
Lemma~\ref{lem:chordba}(a). 
This replaces the associated weights $b_{i,j}b_{k,l}$ with $a_{\widehat{i,k}}
a_{\widehat{j,l}}$, and the sign remains the same, 
$(-1)^{\chi(\mu)} = (-1)^{\chi(\mu')}$. Now we use the same algorithm to
construct the permutation $\pi'$ associated to the new term, and look
at how the cycle $C$ changes to $C'$. Let $\tau$ and $\tau'$ be  terms
 obtained in the determinant expansion of $M_F$ including the sign.

There are four ways in which these 4 numbers are arranged in $C$. We
list these and the way they transform in Table~\ref{tab:cycdec1}. 
In each case, the links
$\{i,j\}$ and $\{k,l\}$ are broken and the links $\{i,k\}$ and
$\{j,l\}$ are formed. Recall that $i<j$ and $k<l$ according to 
Lemma~\ref{lem:chordba}(a).

\begin{table}[h!]
\begin{tabular}{|c|c|c|c|}
\hline
$C \in \pi$ & $C' \in \pi'$ & Factors in $\pi$ & 
Relative sign \\
\hline
$(i,j,\dots,k,l,\dots)$ &$(i,k,\dots,j,l,\dots)$  & $b_{i,j} b_{k,l}$ & $+1$ \\
$(i,j,\dots,l,k,\dots)$ &$(i,k,\dots)(j,\dots,l)$ & $b_{i,j} (-b_{k,l})$ & $-1$ \\
$(j,i,\dots,k,l,\dots)$ &$(j,l,\dots)(i,\dots,k)$ & $(-b_{i,j}) b_{k,l}$ & $-1$ \\
$(j,i,\dots,l,k,\dots)$ &$(j,l,\dots,i,k,\dots)$ & $(- b_{i,j})(-b_{k,l})$ & $+1$ \\
\hline
\end{tabular}
\vspace{0.2cm}
\caption{Comparison between the difference of the number of cycles in
  $C$ and $C'$, and the relative sign between the factor in $\pi$ and
  $a_{\widehat{i,k}}a_{\widehat{j,l}} \in \pi'$.} \label{tab:cycdec1}
\end{table}

We now need an result from undergraduate combinatorics.  
When $n$ is odd (resp. even), a permutation $\pi$ of size $n$ is odd
if and only if the number of cycles is even (resp. odd) in its cycle
decomposition. Therefore, the parity of the
permutation $\pi'$ is different from $\pi$ in cases (1) and (4) and
the same as that of $\pi$ in cases (2) and (3). Notice that the
relative signs also follow the same pattern.

To summarize, we have shown that $(-1)^{\chi(\mu)}= \sgn(\tau)$ holds
if and only if $(-1)^{\chi(\mu')}= \sgn(\tau')$ holds when $\mu,\mu' \in (\B_{F})_{n}$. 
But this is
precisely the induction step since $\mu'$ and $\mu''$ have one less cup and one less cap that $\mu$. 
From Lemma~\ref{lem:onlyas}, we have already shown that the
terms which correspond to Brauer diagrams with only arcs
have the correct sign. 
This completes the proof.

We follow the same strategy when $\mu$ belongs to $(\B_{B})_{n}$. 
The difference is that $l<k$ and that $b_{i,j}$ and $b_{k,l}$ come with
additional factors of $\imag$. The interested reader can check that these two 
contribute opposing signs leading to the same result.
\end{proofof}

For even antisymmetric matrices, this gives a natural combinatorial interpretation of Cayley's theorem different from the ones given by Halton \cite{halton} and E{\u{g}}ecio{\u{g}}lu \cite{egecioglu}. 

\begin{cor}[Cayley 1847, \cite{Cayley}]
For an antisymmetric matrix $M$ of size $n$, 
\be
\det M = \begin{cases}
(\pf M)^2 & \text{$n$ even}, \\
  0 & \text{$n$ odd}.
\end{cases}
\ee
\end{cor}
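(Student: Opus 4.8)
The plan is to derive Cayley's theorem as a direct corollary of Theorem~\ref{thm:detm} applied to a purely antisymmetric matrix, so that the combinatorial bookkeeping already done in the main result does all the work. The key observation is that an antisymmetric matrix $M$ is exactly the specialization $M_B$ with all symmetric indeterminates set to zero: setting $a_{i,j}=0$ for all $i\le j$ (including the diagonal $a_{i,i}=0$) turns $M_B$ into a matrix whose $(i,j)$ entry is $\imag\, b_{i,j}$ for $i<j$ and $-\imag\, b_{j,i}$ for $i>j$, which after absorbing the scalar $\imag$ into a renaming $m_{i,j}=\imag\, b_{i,j}$ is a generic antisymmetric matrix. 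So I would first reduce the statement to computing $\det(M_B)\big|_{a=0}$ via the Brauer-diagram formula.

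Next I would read off which Brauer diagrams survive this specialization. In the weight $w(\mu)=\prod_{(i,j)\in\mu_T}b_{i,j}\prod_{(i,j)\in\mu_B}b_{i,j}\prod_{(i,j)\in\mu_{TB}}a_{\widehat{i,j}}$, any arc contributes a factor $a_{\widehat{i,j}}$, which vanishes once all $a$'s are set to zero. Hence the only surviving diagrams are those with $\mu_{TB}=\emptyset$, i.e.\ perfect matchings consisting entirely of cups on the top row and caps on the bottom row. For such a diagram to exist at all we need $n$ even, since $n$ vertices in each row must be paired internally; this immediately yields the vanishing $\det M=0$ for $n$ odd. For $n$ even, each surviving $\mu$ decomposes as an independent choice of a perfect matching $\mu_T$ of the top $n$ points and a perfect matching $\mu_B$ of the bottom $n$ points, and its weight factors as $\big(\prod_{(i,j)\in\mu_T}b_{i,j}\big)\big(\prod_{(i,j)\in\mu_B}b_{i,j}\big)$, i.e.\ a product of two independent Pfaffian-type monomials.

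The heart of the argument is then to match the sign $(-1)^{\binom n2}(-1)^{\chi(\mu)}$ against the product of the two Pfaffian signs. I would factor the crossing number as $\chi(\mu)=\chi(\mu_T)+\chi(\mu_B)$, which is legitimate precisely because a cup and a cap never intersect (condition (3) in the definition of a Brauer diagram), so cross-terms between the top and bottom matchings contribute nothing. The sum over all $\mu$ therefore factors as a product of two independent sums, one over matchings of the top row and one over matchings of the bottom row, and each sum $\sum_{\nu}(-1)^{\chi(\nu)}\prod b_{i,j}$ is — up to a global sign — the Pfaffian of the antisymmetric matrix in its standard crossing-number definition. The main obstacle, and the step requiring real care, is pinning down the global sign conventions: I must verify that the prefactor $(-1)^{\binom n2}$ together with the $B$-diagram right-to-left labeling on the bottom row and the extra $\imag$ per $b$-factor (there are $n$ such factors, contributing $\imag^{n}=(-1)^{n/2}$) combine to turn the two crossing-number sums into genuine Pfaffians rather than Pfaffians times an unwanted sign, and that the two factors assemble into $(\pf M)^2$ rather than $\pm(\pf M)^2$. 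I expect the cleanest route is to fix a reference matching (say the one pairing $2t-1$ with $2t$), compute the sign explicitly for it, and argue that all crossing-number dependence is consistent with the standard Pfaffian sign rule, leaving only a $\mu$-independent constant to check — which must equal $1$ by, for instance, comparing leading terms or specializing to the identity-like diagram.
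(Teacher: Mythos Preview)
Your overall strategy---set all $a_{i,j}=0$, observe that only arc-free Brauer diagrams survive, deduce vanishing for odd $n$, and for even $n$ factor the remaining sum into independent top and bottom matchings---is exactly the paper's argument. The one difference is that you chose the $M_B$ representation, whereas the paper works with $M_F$. With $M_F$ the specialization $a=0$ gives the antisymmetric matrix on the nose (no $\imag$), the formula in Theorem~\ref{thm:detm} carries no $(-1)^{\binom n2}$ prefactor, and both rows are labeled left to right, so the cap sum and the cup sum are literally the same Pfaffian expression and the factorization $\sum_\mu(-1)^{\chi(\mu)}w(\mu)=\big(\sum_\nu(-1)^{\chi(\nu)}\prod b_{i,j}\big)^2=(\pf M)^2$ is immediate. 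Your route through $M_B$ is not wrong, but it forces you to reconcile three extraneous sign sources---the global $(-1)^{\binom n2}$, the $\imag^{n}$ coming from $m_{i,j}=\imag\,b_{i,j}$, and the right-to-left bottom labeling, which changes which $b_{i,j}$ each cap contributes---and that is precisely the ``main obstacle'' you identify. Switching to $M_F$ removes the obstacle entirely and makes the last paragraph of your proposal unnecessary.
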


\begin{proof}
From \eqref{defm}, we see that all $a_{i,j}$'s are zero for
an antisymmetric matrix for both $M_{F}$ and $M_{B}$. We consider only the former representation since the argument is identical for the latter.
The only $F$-Brauer diagrams in $(\B_{F})_{n}$ that
contribute are those with no arcs. If $n$ is odd,
this is clearly not possible. Thus the determinant is zero.
If $n$ is even, we have the sum in Theorem~\ref{thm:detm} over all Brauer diagrams with only cups and cups.  This sum  now factors into two distinct sums for cups and for caps. But for each of these cases, we know that the answer is the same since they are independent sums. Moreover, each of these is the Pfaffian \cite{stem}.
\end{proof}

It would be interesting to find an analogous expression for the
permanent of a matrix. This might entail finding a different planar
graph instead of a Brauer diagram 
or a different analog of the crossing number or both. 
For example, the permanent of  the matrix  in \eqref{mateg3} is given by
\be \label{permeg3}
\begin{split}
&\text{Perm}(M^{(3)}_{F})=a_{1,3}^2 a_{2,2}+a_{2,3}^2 a_{1,1} +a_{1,2}^2
a_{3,3}- b_{1,2}^2 a_{3,3}-b_{1,3}^2 a_{2,2} -b_{2,3}^2 a_{1,1}  \\
&+2 a_{1,2} a_{1,3} a_{2,3}+a_{1,1} a_{2,2} a_{3,3}-2 a_{2,3} b_{1,2}b_{1,3}
-2 a_{1,2} b_{1,3}b_{2,3}+2 a_{1,3} b_{1,2} b_{2,3}.
\end{split}
\ee
Note that not all signs in the permanent expansion of are positive. 

\section*{Acknowledgements}
This work was motivated by discussions with Craig Tracy, whom we thank
for encouragement and support. We also thank Ira Gessel, David M. Jackson, Christian Krattenthaler, Greg Kuperberg,  Dan Romik, Alexander Soshnikov and Doron Zeilberger for constructive feedback. We also thank a referee for a very careful reading of the manuscript which led to many improvements.

\bibliographystyle{alpha}
\bibliography{hermit}

\end{document}